\newcolumntype{L}[1]{>{\raggedright\let\newline\\\arraybackslash\hspace{0pt}}m{#1}}
\newcolumntype{C}[1]{>{\centering\let\newline\\\arraybackslash\hspace{0pt}}m{#1}}
\newcolumntype{R}[1]{>{\raggedleft\let\newline\\\arraybackslash\hspace{0pt}}m{#1}}
\def\ps@pprintTitle{%
\let\@oddhead\@empty
\let\@evenhead\@empty
\def\@oddfoot{\centerline{\thepage}}%
\let\@evenfoot\@oddfoot}
\newtheorem{thm}{Theorem}[section]
\newtheorem{lemma}[thm]{Lemma}
\newtheorem{prop}[thm]{Proposition}
\newtheorem{defn}[thm]{Definition}
\newtheorem{rem}[thm]{Remark}
\newtheorem{example}[thm]{Example}
\begin{document}

\title[The G-invariant spectrum]{The G-invariant spectrum and non-orbifold singularities}


\author{Ian M.~Adelstein and M.~R.~Sandoval} 
\address{Department of Mathematics, Trinity College\\ Hartford, CT 06106 United States}


\thanks{The authors would like to thank Carolyn Gordon and David Webb for many helpful conversations, as well as Emilio Lauret for providing valuable feedback. We would like to acknowledge the support of the NSF, Grant DMS-1632786. Finally, we would like to thank the referee for his or her thorough, prompt, and helpful review of the paper. }


\subjclass{58J50; 58J53; 22D99; 53C12}

\keywords{Spectral geometry; orbit spaces; group actions}


\begin{abstract}
We consider the $G$-invariant spectrum of the Laplacian on an orbit space $M/G$ where $M$ is a compact Riemannian manifold and $G$ acts by isometries. We generalize the Sunada-Pesce-Sutton technique to the $G$-invariant setting to produce pairs of isospectral non-isometric orbit spaces. One of these spaces is isometric to an orbifold with constant sectional curvature whereas the other admits non-orbifold singularities and therefore has unbounded sectional curvature. We conclude that constant sectional curvature and the presence of non-orbifold singularities are inaudible to the G-invariant spectrum. 
\end{abstract}

\maketitle
%
%
%
%
%
%

\section{Introduction} 

Given a compact Riemannian manifold $M$ and a compact subgroup of its isometry group $G \leq Isom(M)$ we consider the spectral geometry of the orbit space $M/G$. We are interested in the $G$-invariant spectrum:  the spectrum of the Laplacian restricted to the smooth functions on $M$ which are constant on the orbits. We let $C^{\infty}(M)^G$ denote the space of such functions. The orbit space $M/G$ has a natural metric structure that it inherits from $M$ and a natural smooth structure given by the algebra $C^{\infty}(M/G)$ consisting of functions $f \colon M/G \to \mathbb{R}$ whose pullback via $\pi \colon M \to M/G$ are smooth $G$-invariant functions on $M$, i.e.~$f \in C^{\infty}(M/G)$ if and only if $\pi^* f \in C^{\infty}(M)^G$. A map $\phi \colon M_1/G_1 \to M_2/G_2$ is said to be \emph{smooth} if the pullback of every smooth function on $M_2/G_2$ is a smooth function on $M_1/G_1$, i.e.~$\phi^*f  \in C^{\infty}(M_1/G_1)$ for every $f \in C^{\infty}(M_2/G_2)$. The notion of equivalence between orbit spaces that we use in this paper is the following: 

\begin{defn}\label{d:srfiso} A map $\phi \colon M_1/G_1 \to M_2/G_2$ is said to be a \emph{smooth SRF isometry} if it is an isometry of metric spaces that is smooth (in the above sense) with smooth inverse. 
\end{defn}

We note that the orbit space $M/G$ is an example of a singular Riemannian foliation and the term \emph{SRF isometry} comes from the literature on singular Riemannian foliations cf.~\cite{AS2017, AL2011}. If the orbit spaces $M_1/G_1$ and $M_2/G_2$ have the structure of Riemannian orbifolds then the above definition is equivalent to the standard notion of smooth isometry between Riemannian orbifolds. 

When $M$ is a compact Riemannian manifold one considers the eigenvalues of the Laplace-Beltrami operator $\Delta$, i.e.~those real numbers $\lambda$ for which there exists a non-trivial solution to the equation $\Delta(f) = \lambda f,~f \in C^{\infty}(M)$. These eigenvalues form a discrete sequence of non-negative real numbers. This sequence, counting multiplicities, is referred to as the spectrum of $M$. Given a compact subgroup of the isometry group $G \leq Isom(M)$ we consider the subsequence of eigenvalues that correspond to eigenfunctions which are constant on the $G$-orbits, again counting multiplicities. We will refer to this subsequence as the $G$-invariant spectrum of $M$. Given closed subgroups $H_i \leq G$ for $i \in \{1,2\}$ we say that the quotient spaces $M/H_1$ and $M/H_2$ are isospectral if the $H_i$-invariant spectra are equal. 

We are interested in the following inverse spectral questions: What information about the singular set of an orbit space $M/G$ is encoded in its $G$-invariant spectrum? In particular, can one hear the existence of non-orbifold singularities, i.e.~whether or not an orbit space is an orbifold? We note that the negative inverse spectral results from the manifold and orbifold settings hold in the more general setting of orbit spaces. It is therefore known that isotropy type \cite{SSW2006} and the order of the maximal isotropy groups \cite{RSW2008} are inaudible (i.e.~not determined by the $G$-invariant spectrum). Positive results exist for the $G$-equivariant spectrum where it has been shown that dimension and volume of the orbit space are spectrally determined \cite{BH1984}. In the Riemannian foliation setting it follows that the spectrum of the basic Laplacian determines dimension and volume of the space of leaf closures \cite{R1998}. 

In order to address the above spectral questions we first generalize the Sunada-Pesce-Sutton \cite{Sut2002} technique to the $G$-invariant setting (see Theorem~\ref{generalization}). We then use this generalization to produce pairs of isospectral non-isometric orbit spaces. As with all Sunada isospectral pairs our examples are quotients of a manifold $M$ by representation equivalent subgroups $H_1$ and $H_2$ of the isometry group. The quotients $M/H_i$ are isospectral in the sense that the $H_i$-invariant spectra of the Laplacian on $M$ are equal. 

\begin{thm}\label{main} Consider the subgroups $H_1 = U(3)$ and $H_2 = Sp(1) \times SO(4)$ of $U(6) \leq Isom(S^{11})$ under the embeddings where $A \in U(3)$ acts on $\mathbb{C}^6 = \mathbb{C}^3 \oplus \mathbb{C}^3$ as $(A, \bar{A})$ and $(B,C) \in Sp(1) \times SO(4)$ acts on $\mathbb{C}^6 = \mathbb{C}^2 \oplus \mathbb{C}^4$ as $(B,C)$, where $C$ acts on $\mathbb{C}^4 \cong \mathbb{R}^4 \oplus \mathbb{R}^4$ as $(C,C)$. Then the orbit spaces $S^{11}/H_1$ and $S^{11}/H_2$ are isospectral yet non-isometric.
\end{thm}

The representations of the groups $H_i$ used above are shown to be representation equivalent in \cite{AYY}. The authors then let these groups act on $SU(6)$ producing isospectral pairs of homogeneous Riemannian manifolds which are simply connected, yet non-homeomorphic \cite[Theorem 1.7]{AYY}. 

In Section~\ref{examples} we consider the geometry of the isospectral pair $O_1 = S^{11}/U(3)$ and $O_2 = S^{11}/(Sp(1) \times SO(4))$. We use principal isotropy reduction to study these spaces and show that they have different isotropy type, maximal isotropy dimension, and quotient codimension of the strata. The most striking differences between these isospectral orbit spaces are given in the following theorem. 

\begin{thm}\label{main2} The orbit space $O_1$ is smoothly SRF isometric to $S^3 / \mathbb{Z}_2$, a hemisphere of constant sectional curvature, whereas $O_2$ admits a non-orbifold point and therefore has unbounded sectional curvature. We conclude that constant sectional curvature and the presence of non-orbifold singularities are inaudible properties of the $G$-invariant spectrum.
\end{thm}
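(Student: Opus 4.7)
The plan is to apply principal isotropy reduction to each space and then perform an explicit geometric identification. For $O_1 = S^{11}/U(3)$, the principal isotropy is a copy of $U(1)$ and reduction yields $O_1 \cong S^7/U(2)$ with $U(2)$ acting on $\mathbb{C}^2 \oplus \mathbb{C}^2$ via $\mathrm{st}\oplus\mathrm{st}^*$. Packaging $(v,w)$ into the $2\times 2$ matrix $X := [v \mid \bar w]$ turns this action into left multiplication $X \mapsto AX$, and the orbit map becomes $X \mapsto X^*X$. Restricted to $S^7$, the image is the Bloch ball of trace-one positive semidefinite Hermitian $2\times 2$ matrices, and the induced quotient metric is the Bures-Uhlmann metric familiar from the purification picture in quantum information theory. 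In coordinates $\vec r = \sin\chi \cdot \hat n$ with $\chi \in [0, \pi/2]$ and $\hat n \in S^2$, the Bures metric takes the form $\tfrac14(d\chi^2 + \sin^2\chi \, d\hat n^2)$, i.e., the round metric on a closed hemisphere of $S^3(1/2)$, which has constant sectional curvature $4$. Since $X \mapsto X^*X$ is polynomial $U(2)$-invariant with smooth local sections on each stratum, the induced map is a smooth SRF isometry extending across the pure-state boundary $\mathbb{CP}^1$.

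For $O_2 = S^{11}/(Sp(1) \times SO(4))$, I would exhibit a non-orbifold point via slice analysis. Take $p = (u, 0) \in S^{11} \subset \mathbb{C}^2 \oplus \mathbb{C}^4$ with $|u|=1$; since $Sp(1)$ acts freely on the unit sphere of $\mathbb{H}$, the isotropy at $p$ is $\{1\}\times SO(4) \cong SO(4)$ and the normal slice is the full $\mathbb{C}^4 = \mathbb{R}^4 \oplus i\mathbb{R}^4$ on which $SO(4)$ acts diagonally via its standard representation. By the slice theorem, a neighborhood of $[p]$ in $O_2$ is the metric cone $\mathbb{R}^8/SO(4)$ over the link $L = S^7/SO(4)$. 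This $SO(4)$-representation on $\mathbb{R}^4 \oplus \mathbb{R}^4$ is not polar (it is not an isotropy representation in Dadok's classification), so in O'Neill's formula for the Riemannian submersion $S^7 \to L$ the $A$-tensor is non-trivial at principal points and contributes a strictly positive correction, yielding $K_L > 1$ generically. On a metric cone over such a link, the sectional curvature of an angular $2$-plane at distance $r$ from the apex has the form $(K_L - 1)/r^2$, which blows up as $r \to 0$. Thus $O_2$ has unbounded sectional curvature at $[p]$, and since any Riemannian orbifold has locally bounded sectional curvature (being locally modeled on a quotient of a smooth Riemannian manifold by a finite isometric group), $[p]$ is not an orbifold point.

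The main difficulty will be matching the intrinsic quotient metric on $S^7/U(2)$ with the explicit hemispherical Bures-Uhlmann metric in closed form and verifying that the smooth SRF isometry extends across the singular boundary stratum; the non-orbifold / curvature-blowup argument for $O_2$ is then a comparatively clean consequence of non-polarity of the slice representation at $p$.
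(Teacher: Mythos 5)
Your treatment of $O_1$ takes a genuinely different route from the paper. The paper reduces $S^{11}/U(3)$ to $S^7/U(2)$ (Lemma~\ref{pid}) and then simply quotes \cite[Theorem 1 and Section 3.4]{GL2015} for the isometry with the curvature-4 hemisphere, whereas you identify the quotient explicitly: packaging $(v,w)\mapsto X=[v\mid\bar w]$ turns the $st\oplus st^*$ action into left multiplication, polar decomposition shows the fibers of $X\mapsto X^*X$ are exactly the $U(2)$-orbits, and the Bures/purification computation with $r=\sin\chi$ gives the round hemisphere of radius $1/2$. That is correct and is a nice self-contained replacement for the citation; what it buys is an explicit metric model, at the cost of having to justify smoothness of the identification. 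For that last point, rather than the vague appeal to ``smooth local sections on each stratum,'' it is cleaner to argue as the paper does: all the orbit spaces involved are $3$-dimensional, so \cite[Theorem 1.5]{AL2011} automatically upgrades metric isometries to smooth SRF isometries (alternatively, invoke Schwarz's theorem on smooth invariant functions, since the entries of $X^*X$ generate the invariants).

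For $O_2$ you locate the same singular stratum as the paper (your $p=(u,0)$ corresponds, after principal isotropy reduction, to row $D$ of Table 2) and the same key fact, non-polarity of the slice representation of $SO(4)$ on $\mathbb{R}^4\oplus\mathbb{R}^4$, but two steps have genuine gaps. First, ``not an isotropy representation in Dadok's classification'' does not establish non-polarity: polar means \emph{orbit-equivalent} to the isotropy representation of a symmetric space \cite{D1985}, so you must actually rule this out --- either by checking directly that the normal space at a regular point such as $(e_1,e_2)$ fails to meet nearby orbits orthogonally, or, as the paper does, by reducing so that the slice group has identity component $SO(2)$ acting with trivial isotropy off the origin and applying \cite[Lemma 2]{GL2015} (a polar action of a connected group with trivial principal isotropy must have a codimension-one stratum with nontrivial isotropy). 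Second, your curvature blow-up argument treats a neighborhood of $[p]$ as the exact metric cone over $S^7/SO(4)$; the slice theorem gives this only infinitesimally (as the tangent cone), so the explosion $(K_L-1)/r^2\to\infty$ on the cone does not transfer to $O_2$ without further work, and the claim $K_L>1$ somewhere needs the fact that the O'Neill tensor of the orbit map vanishes identically on the regular part if and only if the representation is polar, not just a ``generic'' assertion. Making this rigorous is precisely the content of \cite[Theorem 1.1]{LT2010} (the paper's Theorem~\ref{22}), which simultaneously yields ``non-polar slice $\Rightarrow$ non-orbifold point'' and ``$\Rightarrow$ unbounded curvature''; citing it (as the paper does) closes both gaps, whereas your step ``orbifolds have locally bounded curvature'' is fine but only handles the easier direction.
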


It is important to note that although the isometry between $O_1$ and $S^3 / \mathbb{Z}_2$ is a smooth SRF isometry, it does not preserve leaf codimension and we therefore can not conclude that these spaces are isospectral, cf.~\cite{AS2017}. Indeed, it can be shown by direct computation that the Neumann spectrum on $S^3 / \mathbb{Z}_2$ is distinct from the $U(3)$-invariant spectrum on $S^{11}$.

The question of whether orbifold singularities are audible, i.e.~whether there exists an isospectral pair of orbifolds one of which admits singular points whereas the other does not (and is therefore a manifold), is an important open question in the field of spectral geometry, cf.~\cite{Sut2010}. Our examples demonstrate that this question can be reformulated in the more general orbit space setting: Does there exist an isospectral pair of orbit spaces one of which admits singular points whereas the other does not (and is therefore a manifold)? 

The fact that constant sectional curvature is not determined by the $G$-invariant spectrum should be viewed in light of the positive inverse spectral results on constant sectional curvature in the manifold setting. By manipulating the terms in the asymptotic expansion of the heat trace, Berger et al.~\cite{Berger} showed that constant sectional curvature is an audible property of the Laplace spectrum for manifolds of dimension two or three. Using similar techniques Tanno \cite{Tanno1973} extended these results to manifolds of dimensions less than six. Although asymptotic expansions for the heat trace on orbifolds \cite{DGGW2008} and Riemannian foliations \cite{R2010} are known, the terms in these expansions are more complicated than in the manifold setting as they include information from the singular sets of these spaces. The methods of Berger et al.~and Tanno do not readily apply in these more exotic settings. Indeed, as the isospectral orbit spaces from Theorem~\ref{main} demonstrate, constant sectional curvature is an inaudible property of the G-invariant spectrum.

The paper proceeds as follows. In Section~\ref{sunada} we generalize the Sunada-Pesce-Sutton \cite{Sut2002} technique in order to prove Theorem~\ref{main}. In Section~\ref{smooth} we review a classification of orbifold points and orbifold singularities. In Section~\ref{examples} we study the inaudible properties of the isospectral non-isometric orbit spaces provided by Theorem~\ref{main} and prove Theorem~\ref{main2}.

\section{The $G$-invariant Sunada-Pesce-Sutton technique}\label{sunada}

Negative inverse spectral results are realized by studying pairs of isospectral non-isometric spaces. The celebrated Sunada technique \cite{Sun1985} provides a systematic method for producing such pairs. This technique has been generalized and applied in many different settings by modifying its various hypotheses. In this section we generalize the Sunada-Pesce-Sutton \cite{Sut2002} technique to the $G$-invariant setting. We start by reviewing the original Sunada technique and the various generalizations that lead to our $G$-invariant formulation. The original theorem concerns finite group actions and is stated in terms of \emph{almost conjugate} subgroups of the isometry group.

\begin{defn} Let $G$ be a finite group and let $H_1$ and $H_2$ be subgroups of $G$. Then $H_1$ is said to be \emph{almost conjugate} to $H_2$ in $G$ if $( \# [g]_G \cap H_1) =( \# [g]_G \cap H_2)  $ for each $G$-conjugacy class $[g]_G$. We call the triple $(G, H_1, H_2)$ a \emph{Sunada triple}. 
\end{defn}

\begin{thm}[Sunada's Theorem] Let $(G, H_1, H_2)$ be a Sunada triple and $M$ a compact Riemannian manifold on which $G$ acts by isometries. Assume that $H_1$ and $H_2$ act freely. Then $M / H_1$ and $M / H_2$ with the induced Riemannian metrics are strongly isospectral as Riemannian manifolds. 
\end{thm}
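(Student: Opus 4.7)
The plan is to translate the almost conjugacy condition into a representation-theoretic equivalence and then read off isospectrality directly from the spectral decomposition of the Laplacian on $M$ under the $G$-action.

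First I would show that the Sunada condition on $(G,\Gamma_1,\Gamma_2)$ is equivalent to an isomorphism of $G$-representations
$$\mathrm{Ind}_{\Gamma_1}^{G}\mathbf{1}\;\cong\;\mathrm{Ind}_{\Gamma_2}^{G}\mathbf{1},$$
where $\mathbf{1}$ denotes the trivial character. By a direct count (Burnside's lemma), the character of the permutation representation $\mathbb{C}[G/\Gamma_i]\cong \mathrm{Ind}_{\Gamma_i}^{G}\mathbf{1}$ evaluated at $g\in G$ equals $|C_G(g)|\cdot\#([g]_G\cap\Gamma_i)/|\Gamma_i|$, so equality of the two characters across all conjugacy classes is precisely the almost conjugacy condition.

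Next, since $G\leq \mathrm{Isom}(M)$, the Laplacian $\Delta$ on $M$ commutes with the $G$-action, and each eigenspace $E_\lambda\subset C^\infty(M)$ is a finite-dimensional $G$-representation. Because $\Gamma_1$ and $\Gamma_2$ act freely, the quotients $M/\Gamma_i$ are smooth Riemannian manifolds and the pullback $\pi_i^{*}\colon C^\infty(M/\Gamma_i)\to C^\infty(M)^{\Gamma_i}$ is a unitary intertwiner of the Laplacians. Hence the multiplicity of $\lambda$ in the spectrum of $\Delta_{M/\Gamma_i}$ equals $\dim E_\lambda^{\Gamma_i}$.

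The two steps are then glued by Frobenius reciprocity:
$$\dim E_\lambda^{\Gamma_i}=\dim\mathrm{Hom}_{\Gamma_i}\bigl(\mathbf{1},E_\lambda|_{\Gamma_i}\bigr)=\dim\mathrm{Hom}_G\bigl(\mathrm{Ind}_{\Gamma_i}^{G}\mathbf{1},\,E_\lambda\bigr),$$
and the first step forces the right-hand side to agree for $i=1,2$. Thus the Laplace spectra of $M/\Gamma_1$ and $M/\Gamma_2$ coincide with multiplicities. Strong isospectrality follows because the same argument applies verbatim to any natural self-adjoint elliptic operator built from the $G$-invariant Riemannian structure (e.g.\ the Hodge Laplacian on forms), since each such operator commutes with the $G$-action and therefore respects the isotypic decomposition. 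The only delicate point, and the main thing requiring care, is the identification of $C^\infty(M/\Gamma_i)$ with $C^\infty(M)^{\Gamma_i}$ as Hilbert spaces intertwining the Laplacians; this is exactly where freeness is used, and relaxing it is presumably what forces the $G$-invariant reformulation in Theorem~\ref{generalization}.
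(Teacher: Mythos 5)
Your argument is correct and is the standard representation-theoretic proof of Sunada's theorem; note that the paper states this result without proof (it is quoted from \cite{Sun1985}), so there is no internal proof to compare against. Your mechanism --- almost conjugacy is equivalent to $\mathrm{Ind}_{\Gamma_1}^{G}\mathbf{1}\cong\mathrm{Ind}_{\Gamma_2}^{G}\mathbf{1}$, and then the multiplicity of $\lambda$ downstairs equals $\dim E_\lambda^{\Gamma_i}=\dim\mathrm{Hom}_G(\mathrm{Ind}_{\Gamma_i}^{G}\mathbf{1},E_\lambda)$ by Frobenius reciprocity --- is exactly the mechanism underlying the paper's $G$-invariant generalization (Theorem~\ref{generalization}), where representation equivalence of the subgroups plays the role of the induced-representation isomorphism and the quantity $\dim E_\lambda^{H_i}$ is taken as the definition of the $H_i$-invariant spectrum, so that the identification $C^\infty(M/\Gamma_i)\cong C^\infty(M)^{\Gamma_i}$, and hence freeness, is no longer needed; this is precisely the point you flag in your closing remark (the only quibbles are cosmetic: the pullback is unitary only after normalizing by $|\Gamma_i|^{-1/2}$, and one should observe that $|\Gamma_1|=|\Gamma_2|$ follows from almost conjugacy so the $1/|\Gamma_i|$ factors in the character computation cancel).
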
 

A simple generalization of this theorem allows for the action to have fixed points (i.e.~not necessarily free). Using this generalization one can produce isospectral pairs of orbifolds, cf.~\cite[Theorem 2.5]{SSW2006}. It is precisely the hypothesis that the action be free that we will modify in the Sunada-Pesce-Sutton generalization to produce isospectral pairs of orbit spaces. Such pairs arise as quotients of a compact Riemannian manifold by nonfinite subgroups of its isometry group. The \emph{almost conjugate} condition no longer applies and we instead appeal to the notion of \emph{representation equivalence} of subgroups. 

\begin{defn} Two representations $\rho_1 \colon G \to GL(V_1)$ and $\rho_2 \colon G \to GL(V_2)$ of a Lie group $G$ are said to be \emph{equivalent} if there exists a vector space isomorphism $T \colon V_1 \to V_2$ such that $\rho_2(g) \circ T = T \circ \rho_1(g)$ for every $g \in G$.
\end{defn}

\begin{defn} Two closed subgroups $H_1, H_2 \leq G$ of a compact Lie group $G$ are said to be \emph{representation equivalent} if the quasi-regular representations $\emph{Ind}_{H_1}^G(1_{H_1})$ and $\emph{Ind}_{H_2}^G(1_{H_2})$ are equivalent.
\end{defn}

Note when $G$ is a finite group that subgroups $H_1$ and $H_2$ are almost conjugate if and only if they are representation equivalent. 

One of the earliest and most fruitful generalizations of the original Sunada theorem was provided by DeTurck and Gordon \cite{DeTGL}. They allow $G$ to be a Lie group and $H_1$ and $H_2$ to be representation equivalent cocompact discrete subgroups. When $G$ acts by isometries on a Riemannian manifold $M$ in such a way that $M / H_1$ and $M / H_2$ are compact manifolds then these manifolds are strongly isospectral. 

Pesce \cite{P} weakened the hypothesis that the discrete subgroups $H_1$ and $H_2$ be representation equivalent, applying the idea of Frobenius reciprocity in his proof. Sutton \cite[Theorem 2.3]{Sut2002} generalized the Sunada-Pesce technique from the setting of discrete subgroups to connected subgroups, allowing for the construction of the first locally non-isometric, simply-connected isospectral homogeneous manifolds.

We now state the $G$-invariant version of the Sunada theorem that we will apply in this paper, noting that the proof follows directly the proofs of the Pesce-Sutton generalizations. We let $[ \rho_1 : \rho_2]$ denote the multiplicity of the representation $\rho_2 \colon G \to GL(V_2)$ in $\rho_1 \colon G \to GL(V_1)$.

\begin{thm}[$G$-invariant Sunada-Pesce-Sutton technique]\label{generalization} Let $M$ be a compact Riemannian manifold and $G \leq Isom(M)$ a compact Lie group. Suppose $H_1, H_2 \leq G$ are closed, representation equivalent subgroups. Then the orbit spaces $M/H_1$ and $M/H_2$ are isospectral in the sense that the $H_i$-invariant spectra of the Laplacian on $M$ are equal. 
\end{thm}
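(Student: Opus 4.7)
The plan is to reduce the spectral equivalence to an identity of representation multiplicities via Frobenius reciprocity. Since $G$ acts on $M$ by isometries, the induced action of $G$ on $C^\infty(M)$ commutes with the Laplacian $\Delta$. Compactness of $M$ and $G$ ensures that each eigenspace $E_\lambda \subset C^\infty(M)$ is finite-dimensional and $G$-invariant, so it carries a finite-dimensional unitary $G$-representation. The $H_i$-invariant spectrum is by definition the spectrum of $\Delta$ acting on $C^\infty(M)^{H_i}$, so counting eigenvalues with multiplicity amounts to computing $\dim E_\lambda^{H_i}$ for each $\lambda$.

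Next, I would decompose each $G$-module $E_\lambda$ into isotypic components using the Peter--Weyl theorem for the compact group $G$:
\[
E_\lambda \;\cong\; \bigoplus_{\pi \in \widehat{G}} m_\pi(\lambda)\, V_\pi,
\]
where $\widehat G$ is the unitary dual of $G$ and only finitely many multiplicities $m_\pi(\lambda)$ are nonzero. Taking $H_i$-invariants is an exact functor on finite-dimensional representations and commutes with the direct sum, so
\[
\dim E_\lambda^{H_i} \;=\; \sum_{\pi \in \widehat G} m_\pi(\lambda)\,\dim V_\pi^{H_i}.
\]

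Finally, I would invoke representation equivalence. By definition, $H_1$ and $H_2$ are representation equivalent in $G$ if the induced $G$-representations $\mathrm{Ind}_{H_1}^G \mathbf{1}$ and $\mathrm{Ind}_{H_2}^G \mathbf{1}$ are isomorphic; by Frobenius reciprocity this is equivalent to the assertion that $\dim V_\pi^{H_1} = \dim V_\pi^{H_2}$ for every $\pi \in \widehat G$. Substituting into the displayed formula yields $\dim E_\lambda^{H_1} = \dim E_\lambda^{H_2}$ for every $\lambda$, which is exactly the equivalence of the two $H_i$-invariant spectra.

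The main obstacle is conceptual rather than computational: one must check that, in the absence of a free action, the argument really does go through without any appeal to the quotient differential structure on $M/H_i$. The key observation, already highlighted in the paper, is that every step above takes place upstairs on $M$, using only the finite-dimensional $G$-module structure on the eigenspaces $E_\lambda$ and the representation-theoretic definition of the $H_i$-invariant spectrum as the spectrum of $\Delta|_{C^\infty(M)^{H_i}}$. The freeness and totally-geodesic-fiber hypotheses appearing in \cite{Sut2002} served only to pull eigenfunctions between $C^\infty(M/H_i)$ and $C^\infty(M)^{H_i}$; since we work directly with the latter, they may be dropped.
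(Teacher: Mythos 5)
Your argument is correct, but it takes a different route from the paper, which gives no self-contained proof at all: the authors simply observe that the statement follows from Sutton's generalizations of Sunada--Pesce (Theorem 2.3 of \cite{Sut2002}, or Theorem 2.7 of \cite{Sut2010}), after noting that the freeness, respectively minimal/totally geodesic fiber, hypotheses there are only needed to transfer eigenfunctions between $C^{\infty}(M/H_i)$ and $C^{\infty}(M)^{H_i}$ and hence are superfluous once one defines the spectrum directly on $C^{\infty}(M)^{H_i}$. You instead unwind the mechanism behind those citations: decompose each (finite-dimensional, $G$-invariant) eigenspace $E_\lambda$ into $G$-isotypic pieces, use Frobenius reciprocity to translate representation equivalence of $H_1$ and $H_2$ into $\dim V_\pi^{H_1}=\dim V_\pi^{H_2}$ for all $\pi\in\widehat G$, and conclude $\dim E_\lambda^{H_1}=\dim E_\lambda^{H_2}$. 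This is essentially the Pesce--Sutton argument specialized and carried out upstairs on $M$, and your closing remark correctly identifies why the extra geometric hypotheses drop out; the only step worth making explicit is that the multiplicity of $\lambda$ in the $H_i$-invariant spectrum really is $\dim E_\lambda^{H_i}$, which follows because averaging over the compact group $H_i$ is an orthogonal projection commuting with $\Delta$, so $L^2(M)^{H_i}$ is the Hilbert sum of the spaces $E_\lambda^{H_i}$. The paper's approach buys brevity and situates the theorem in the existing Sunada literature; yours buys a complete, self-contained proof that makes the role of representation equivalence transparent.
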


\begin{proof} For each eigenspace $E_{\lambda}$ of the Laplacian on $C^{\infty}(M)$ we must show that the dimension of the space of $H_1$ fixed vectors equals that of the $H_2$ fixed vectors. The group $G$ acts on $E_{\lambda}$; denote this action $\rho_{\lambda} \colon G \to Hom(E_{\lambda})$. We have that $\text{dim}(E_{\lambda}^{H_i}) = [\text{Res}_{H_i}^G ( \rho_{\lambda}) : 1_{H_i} ] $ and by Frobenius reciprocity that $[\text{Res}_{H_i}^G ( \rho_{\lambda}) : 1_{H_i} ] = [\text{Ind}_{H_i}^G ( H_i ) : \rho_{\lambda} ] $. By the representation equivalence of the $H_i \leq G$ we have $[\text{Ind}_{H_1}^G ( H_1 ) : \rho_{\lambda} ] = [\text{Ind}_{H_2}^G ( H_2) : \rho_{\lambda} ]$. We conclude that the $H_1$-invariant spectrum is equal to the $H_2$-invariant spectrum. 
\end{proof}

Sutton generalizes his own theorem in \cite[Theorem 2.7]{Sut2010} and the version stated above is a special case of this result.

\begin{proof}[Proof of Theorem~\ref{main}] Under the given embeddings \cite[Theorem 1.5]{AYY} demonstrates that $H_1$ and $H_2$ are representation equivalent as subgroups of $SU(6) \leq Isom(S^{11})$. The isospectrality of the spaces $O_1 = S^{11}/H_1$ and $O_2 = S^{11}/H_2$ then follows immediately from Theorem~\ref{generalization}. Finally, we note that there are many geometric properties that distinguish these spaces as non-isometric. We catalogue these properties in Section~\ref{examples}. 
\end{proof}

%
%
%

\begin{section}{Orbifold Points and Non-orbifold Points in an Orbit Space}\label{smooth}
Here we make note of an important characterization of orbifolds in terms of slice representations from \cite{LT2010} that we will use to prove Theorem \ref{main2}. We begin by recalling the notion of polar actions and polar representations, comprehensively studied in \cite{D1985}.

\begin{defn}
An action by a Lie group $G$ of isometries on a complete Riemannian manifold $M$ is said to be {\it polar} if there exists a complete submanifold $\Sigma$ that meets every orbit orthogonally. Such a submanifold is known as a {\it section}. Note that such submanifolds are necessarily totally geodesic.
\end{defn}

\begin{example}\label{s2}\normalfont
By way of illustration of this idea, we note that the action of $SO(2)$ on $S^2$ by rotations about an axis is polar. One may choose $\Sigma$ to be any meridian great circle. The quotient space is an interval of the form $[0,\pi].$ This has an orbifold structure with $\mathbb{Z}_2$ action at the endpoints. In fact, it is a good orbifold, $S^1/\mathbb{Z}_2.$ This is more generally true for all polar actions, as we see from the proposition below, which follows easily from Chapter 5 of \cite{PT1988}.
\end{example}

\begin{prop}
Let $M$ be a compact Riemannian manifold and $G$ a closed subgroup of the isometry group of $M$. If the action of $G$ on $M$ is polar, then the quotient $M/G$ has the smooth structure of a good orbifold.
\end{prop}

\begin{proof} Let $\Sigma\subset M$ be a section. Now recall the generalized Weyl group of $\Sigma$ from \cite{PT1988}: Let $N(\Sigma)=\{g\in G\,|\,g\cdot\Sigma=\Sigma\}$. Note that this is the largest subgroup of $G$ which induces an action on $\Sigma$. Let $Z(\Sigma)=\{g\in G\,|\,g\cdot s=s,\, \forall s\in \Sigma\} ,$ which is the kernel of the induced action. Then the {\it generalized Weyl group of }$\Sigma$ is $W(\Sigma)=N(\Sigma)/Z(\Sigma)$. Let $W:=W(\Sigma).$ From Proposition 5.6.15 of \cite{PT1988}, $W$ is discrete in general, and thus, for $G$ compact, $W$ is finite. Thus $\Sigma/W$ has the smooth structure of a good orbifold. By Theorem 5.6.25 of \cite{PT1988}, the restriction map $r: C^\infty(M)^G\rightarrow C^\infty(\Sigma)^W$ is an isomorphism, and hence $M/G$ also has the smooth structure of a good orbifold.
\end{proof}

\begin{rem}\normalfont
Notice that in Example \ref{s2}, the action of $S^1$ is effective and thus $W$ is the group generated by 180-degree rotations, and hence is isomorphic to $\mathbb{Z}_2$ with the previously claimed action on the endpoints.
\end{rem}

The arguments in the previous proof are global, rather than local, in nature and produce a stronger result than we seek--that of a good orbifold. One can ask if weaker, more local conditions on the action would guarantee a more generic orbifold structure in the quotient. Indeed, that is known to be the case in the literature of orbit spaces, as we see in what follows.


\begin{defn}
Let $x\in M$ and let $H_x$ be the normal space to the orbit through $x$. The isotropy group $G_x$  acts on $H_x.$ This is called the {\it slice representation} at $x$. If the slice representations at every point $x\in M$ are polar, then the representation is said to be {\it infinitesimally polar}, as in \cite{GL2015}. We note that every polar representation is infinitesimally polar.
\end{defn}

The characterization of orbifold points that we will use comes from Theorem 1.1 of \cite{LT2010}, which we state below:

\begin{thm}[Theorem 1.1 of \cite{LT2010}]\label{22}
Let $M$ be a Riemannian manifold and let $G$ be a closed group of isometries of $M$. Let $B=M/G$ be the quotient, which is stratified by orbit type. Let $B_0$ be the maximal stratum of $B$. Let $x\in M$ be a point with isotropy group $G_x$ acting on the normal space $H_x$ of the orbit $G\cdot x\subset M$. Set $y=G\cdot x\in B.$ Let $z\in B_0$, and let $\overline{\kappa}(z)$ be the maximum of the sectional curvatures of the tangent planes at $z$. Then the following are equivalent:
\begin{enumerate}
\item $\displaystyle{\limsup_{z\in B_0, z\rightarrow y}\overline{\kappa}(z)<\infty.}$
\item $\displaystyle{\limsup_{z\in B_0, z\rightarrow y}\overline{\kappa}(z)\cdot (d(z,y))^2=0.}$
\item The action of $G_x$ on $H_x$ is polar. 
\item A neighborhood of $y$ in $B$ is a smooth Riemannian orbifold. 
\end{enumerate}
\end{thm}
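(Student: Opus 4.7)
The plan is to reduce the entire statement to the linear (slice representation) setting via the slice theorem: an equivariant tubular neighborhood of the orbit $G\cdot x$ in $M$ is diffeomorphic to the associated bundle $G\times_{G_x} H_x$, so a neighborhood of $y$ in $B$ is isometric to a neighborhood of the origin in the quotient $H_x/G_x$. All four conditions are local at $y$, so it suffices to prove the equivalence for a linear orthogonal representation $\rho\colon K\to O(V)$ with $K=G_x$ and $V=H_x$, studied near $0\in V$. This reduction also explains why the slice representation is the correct object to mention in condition (3).

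I would then prove the implications in a cycle, dispatching the easy ones first. For $(4)\Rightarrow(1)$: if a neighborhood of $[0]$ in $V/K$ is isometric to $U/\Gamma$ with $\Gamma$ finite acting on a smooth Riemannian manifold $U$, then the regular stratum is an open smooth submanifold with curvature inherited from the smooth ambient metric, hence bounded near $[0]$. $(2)\Rightarrow(1)$ is immediate. For $(3)\Rightarrow(4)$: a polar representation admits a section $\Sigma$ meeting every orbit, and the Dadok--Palais--Terng theory (already invoked in the preceding proposition on polar actions and the generalized Weyl group $W=N(\Sigma)/Z(\Sigma)$) shows that $W$ is a finite reflection group and $V/K\cong \Sigma/W$, a good orbifold.

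The heart of the argument is $(1)\Rightarrow(2)\Rightarrow(3)$. Here I would exploit the scale invariance of the linear setting: for $t>0$, the map $z\mapsto tz$ is $K$-equivariant and multiplies the quotient metric on $V/K$ by $t^2$, hence multiplies sectional curvatures by $t^{-2}$. Consequently the function $F([z]):=\overline{\kappa}([z])\cdot d([z],[0])^2$ is constant along each open ray in the regular stratum, so boundedness of $\overline{\kappa}$ forces $F\equiv 0$, yielding $(1)\Rightarrow(2)$, and in turn forces the regular stratum of $V/K$ to be flat. Now $V_{\mathrm{reg}}\to V_{\mathrm{reg}}/K$ is a Riemannian submersion from flat space, so O'Neill's formula gives
\[
\sec^{V/K}(X,Y)\;=\;\lvert A_X Y\rvert^2
\]
on horizontal planes, where $A$ is the O'Neill integrability tensor. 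Vanishing curvature on the regular stratum therefore forces $A\equiv 0$, i.e.\ the horizontal distribution is integrable; integrating through a regular point produces a totally geodesic submanifold meeting the orbit orthogonally, whose closure in $V$ is a global section of $\rho$, establishing polarity.

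The main obstacle is $(2)\Rightarrow(3)$, and specifically the passage from infinitesimal information (vanishing of $A$ on $V_{\mathrm{reg}}$) to the existence of a \emph{global} section that meets every orbit. One must argue that the horizontal leaves extend smoothly across singular strata and that a single such leaf intersects every $K$-orbit, which requires careful control over how regular leaves degenerate as they approach the singular set. The scaling trick handles $(1)\Leftrightarrow(2)$ cleanly and O'Neill converts (2) into a tensorial statement, but closing the loop back to (3) genuinely uses both the infinitesimal geometry and global properties of linear compact group actions, which is what makes the characterization nontrivial.
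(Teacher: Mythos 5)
The paper does not prove Theorem~\ref{22}; it quotes it verbatim from Lytchak and Thorbergsson \cite{LT2010}, so there is no in-paper proof to compare against. Judged on its own terms, your sketch captures the right strategy and is essentially the route taken in \cite{LT2010}: pass via the slice theorem to a linear orthogonal representation $K\to O(V)$, exploit the metric-cone structure of $V/K$ (scaling by $t$ rescales curvature by $t^{-2}$) to upgrade boundedness of $\overline{\kappa}$ to $\overline{\kappa}\equiv 0$ on the regular stratum, use O'Neill's formula to convert flatness into vanishing of the integrability tensor $A$, and then argue toward a section. Your $(4)\Rightarrow(1)$ and $(3)\Rightarrow(4)$ are fine, the latter resting correctly on Dadok--Palais--Terng.

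Two substantive points. First, you list $(2)\Rightarrow(1)$ among the implications dispatched as ``immediate,'' but it is not: $\overline{\kappa}\cdot d^2\to 0$ does not by itself bound $\overline{\kappa}$ (take $\overline{\kappa}\sim d^{-1}$). What is immediate is the reverse implication $(1)\Rightarrow(2)$. This slip is harmless only because your cycle $(1)\Rightarrow(2)\Rightarrow(3)\Rightarrow(4)\Rightarrow(1)$ already closes, so the stray claim should simply be dropped. Second, and more importantly, the reduction to the slice representation needs a metric-comparison estimate (the slice chart is a diffeomorphism, not an isometry; it agrees with the flat cone metric only to second order at the apex, so one must check the four conditions are insensitive to such perturbations), and the passage from $A\equiv 0$ on the regular part to the existence of a genuine global section meeting every $K$-orbit is, as you rightly flag, the technical heart of \cite{LT2010}. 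That step uses the structure theory of singular Riemannian foliations (equifocality, extension of horizontal leaves across singular strata, and control of the Weyl-group action) and cannot be waved away; as written your sketch identifies the gap without closing it.
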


It is these last two equivalent conditions that are of particular interest here. As a consequence, an orbit space has an orbifold structure if and only if the action of $G$ on $M$ is infinitesimally polar. We note that the proof of the last two equivalences above relies on fundamental properties and results relating to singular Riemannian foliations and polar actions, and thus illustrates how one may augment the local understanding of orbifolds via the connection with quotients of manifolds by isometric group actions (or, equivalently, with the leaf spaces of singular Riemannian foliations with closed leaves).

\begin{defn}
A point $y$ in the quotient satisfying the last condition of the above is said to be an {\it orbifold point}. Otherwise, $y$ is a non-orbifold point.
\end{defn}
As noted in Corollary 1.2 of \cite{LT2010}, the set of non-orbifold points in the quotient has relatively high quotient codimension--at least 3.

\end{section}

\section{The Examples}\label{examples}

In this section we examine the pair of isospectral spaces $O_1 = S^{11}/U(3)$ and $O_2 = S^{11}/(Sp(1) \times SO(4))$ provided by Theorem~\ref{main} to determine inaudible properties of the $G$-invariant spectrum. In particular, we prove Theorem~\ref{main2} demonstrating that the existence of non-orbifold singularities and constant sectional curvature are not determined by the $G$-invariant spectrum. Additionally we have the following proposition which follows immediately from the tables below.

\begin{prop}\label{props} As demonstrated by the isospectral pair $O_1 = S^{11}/U(3)$ and $O_2 = S^{11}/(Sp(1) \times SO(4))$ the following properties of orbit spaces are inaudible, i.e.~not determined by the $G$-invariant spectrum:~isotropy type, maximal isotropy dimension, and the set of quotient codimensions of the strata. 
\end{prop}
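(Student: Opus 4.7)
The plan is to verify the three claimed inaudibility statements by directly computing the orbit-type stratifications of the two isospectral orbit spaces $O_1 = S^{11}/U(3)$ and $O_2 = S^{11}/(Sp(1) \times SO(4))$, recording the results in two tables, and then reading off from the tables that the collections of isotropy types, the maximal isotropy dimensions, and the multisets of quotient codimensions of the strata all differ between $O_1$ and $O_2$. Since Theorem~\ref{main} has already established isospectrality, any one of these discrepancies suffices to certify inaudibility of the corresponding invariant.

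For $O_1$, I would write a point of $S^{11} \subset \mathbb{C}^6$ as a pair $(v,w) \in \mathbb{C}^3 \oplus \mathbb{C}^3$, where $A \in U(3)$ acts as $(Av, \overline{A} w)$. Replacing $w$ by $\overline{w}$ converts this to the ordinary diagonal action of $U(3)$, so that orbits are classified by the Hermitian invariants $|v|^2$, $|w|^2$, and $\langle v, w\rangle$, and the isotropy at $(v,w)$ depends only on the complex span of $\{v,w\}$: when the span is two-dimensional (the generic case) the isotropy is $U(1)$, and when it is one-dimensional or zero the isotropy is $U(2)$. A short dimension count then gives two strata of quotient codimensions $0$ and $1$.

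For $O_2$, the action on $\mathbb{C}^2 \oplus \mathbb{C}^4$ is block diagonal, so the isotropy at $(v,w)$ factors as $(Sp(1))_v \times (SO(4))_w$. Since $Sp(1) \cong SU(2)$ acts simply transitively on $S^3 \subset \mathbb{C}^2$, the first factor is trivial whenever $v \neq 0$ and is all of $Sp(1)$ when $v = 0$. The second factor is analyzed by writing $w = w_1 + i w_2$ with $w_j \in \mathbb{R}^4$ and distinguishing whether $\{w_1, w_2\}$ is $\mathbb{R}$-linearly independent, dependent but nonzero, or zero; the resulting $SO(4)$-stabilizers are $SO(2)$, $SO(3)$, and $SO(4)$ respectively. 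Assembling the five resulting isotropy types and computing for each the real dimension of the corresponding stratum in $S^{11}$ together with the dimension of a generic orbit produces a table with strata of quotient codimensions $0$, $1$, $1$, $2$, and $3$, and with isotropy dimensions up to $6$.

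Comparing the two tables then settles all three inaudibility statements at once: the isotropy-type lattices are non-isomorphic (only $O_2$ has isotropy subgroups involving an $Sp$-factor, and indeed the stratification of $O_1$ has only two orbit types while that of $O_2$ has five), the maximal isotropy dimension is $4 = \dim U(2)$ for $O_1$ but $6 = \dim SO(4) = \dim(Sp(1) \times SO(3))$ for $O_2$, and the set of quotient codimensions of the strata is $\{0,1\}$ for $O_1$ versus $\{0,1,2,3\}$ for $O_2$. The main obstacle in executing this plan is the bookkeeping in the two stratifications: one must be careful to verify that the enumerated isotropy representatives are pairwise non-conjugate in the ambient isometry group and to compute correctly the dimensions of the principal orbits at each stratum, after which the conclusion of Proposition~\ref{props} is automatic.
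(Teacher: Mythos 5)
Your proposal is correct, and it reaches the same comparison the paper makes, but by a different computational route: the paper first applies principal isotropy reduction (Lemma~\ref{pid}) to replace $O_1$ and $O_2$ by the lower-dimensional presentations $S^7/U(2)$ and $S^7/(Sp(1)\times O(2))$, and then reads the proposition off Tables 1 and 2, which tabulate isotropy and quotient codimension for the \emph{reduced} actions; you instead compute the orbit-type decomposition of the original actions on $S^{11}$ directly (using the conjugation trick to turn $st\oplus st^*$ into the diagonal $U(3)$ action, and the block structure of $Sp(1)\times SO(4)$), obtaining two orbit types with codimensions $\{0,1\}$ versus five orbit types with codimensions $\{0,1,1,2,3\}$ and maximal isotropy dimensions $4$ versus $6$. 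Your route has the advantage of working with the actions named in the statement, so the isotropy data you compare is literally that of $O_1$ and $O_2$, and it avoids invoking the reduction step and the smoothness results of Alexandrino--Lytchak needed there; the paper's route buys reduced models that are reused throughout Section~\ref{examples} (the schematic of $O_2$, Lemma~\ref{two}, and Lemma~\ref{one}), and its finer tables split each orbit type into pieces according to vanishing of coordinates, which records the slice-representation information needed later. Two small points to be aware of: your phrase ``one-dimensional or zero'' for the singular span in $O_1$ should drop ``or zero'' (zero span cannot occur on the sphere, and would give isotropy $U(3)$, not $U(2)$); and your codimension set $\{0,1\}$ for $O_1$ refers to the coarse orbit-type strata, whereas the paper's Table 1 also lists codimension-$3$ pieces ($B_2$, $B_3$) coming from its finer decomposition --- under either convention the data still distinguish the spaces (for instance, only $O_2$ has a codimension-$2$ stratum), so your conclusion stands.
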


We note that some of these results are known in the orbifold setting where the inaudibility of isotropy type \cite{SSW2006} and the order of the maximal isotropy groups \cite{RSW2008} have been demonstrated. Quotient codimension of the strata is a topological property of the quotient and is known to be preserved under smooth SRF isometry \cite{AS2017}. It is nevertheless not determined by the $G$-invariant spectrum. 

The first step in our study of the pairs of isospectral non-isometric orbit spaces provided by Theorem~\ref{main} is an application of principal isotropy reduction, cf.~\cite{GL2015} or \cite{GL2014}.

\begin{lemma}\label{pid} Principal isotropy reduction yields the following smooth SRF isometries (note that these isometries do not preserve the spectra): $$O_1= S^{11}/U(3)=S^7/U(2)$$$$O_2= S^{11}/(Sp(1) \times SO(4))= S^7/(Sp(1)\times O(2)).$$
\end{lemma}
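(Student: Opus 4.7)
The plan is to apply the principal isotropy reduction technique from \cite{GL2014, GL2015}: if a compact Lie group $G$ acts by isometries on a compact Riemannian manifold $M$ with principal isotropy subgroup $H$, then the inclusion $M^H \hookrightarrow M$ descends to a smooth SRF isometry $M^H/(N_G(H)/H) \to M/G$. To prove the lemma I carry out this reduction for each of the two actions on $S^{11}$ by identifying $H$, the intersection of its fixed set with $S^{11}$, and the induced action of the Weyl-type quotient $N_G(H)/H$.

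For $O_1$, an element $A \in U(3)$ acts on $(v,w) \in \mathbb{C}^3 \oplus \mathbb{C}^3$ as $(Av, \overline{A}w)$, so the stabilizer of $(v,w)$ is the subgroup of $U(3)$ fixing both $v$ and $\overline{w}$. For generic $(v,w)$ these two vectors are linearly independent over $\mathbb{C}$, and the principal isotropy is the $U(1)$ fixing the complex 2-plane they span. The fixed set of this $U(1)$ in each $\mathbb{C}^3$ summand is that 2-plane, so $(S^{11})^{U(1)} = S^7 \subset \mathbb{C}^2 \oplus \mathbb{C}^2$. The normalizer of this $U(1)$ in $U(3)$ is the block-diagonal $U(2) \times U(1)$, so $N_{U(3)}(U(1))/U(1) \cong U(2)$, acting on $\mathbb{C}^2 \oplus \mathbb{C}^2$ by the restriction of $st \oplus st^*$. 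Reduction then yields $S^{11}/U(3) \cong S^7/U(2)$ as a smooth SRF isometry.

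For $O_2$, the factor $Sp(1) = SU(2)$ acts freely on $\mathbb{C}^2 \setminus \{0\}$, so it contributes trivially to the principal isotropy. The factor $SO(4)$ acts on $\mathbb{C}^4$ (the complexification of $\mathbb{R}^4$), and for a generic $w = x + iy$ with $x,y \in \mathbb{R}^4$ spanning a real 2-plane, the stabilizer is the $SO(2)$ of rotations in the orthogonal real 2-plane. Hence the principal isotropy is $\{e\} \times SO(2)$; its fixed set is $\mathbb{C}^2$ in the first summand and the complexification of the fixed real 2-plane in the second, giving $S^7 \subset \mathbb{C}^4$. A direct calculation shows $N_{SO(4)}(SO(2)) \cong S(O(2) \times O(2))$ and $N_{SO(4)}(SO(2))/SO(2) \cong O(2)$, acting on the fixed $\mathbb{C}^2$ as the complexified standard representation of $O(2)$. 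Together with the unchanged $Sp(1)$-action on the first summand, this produces the action of $Sp(1) \times O(2)$ on $S^7$, yielding $S^{11}/(Sp(1) \times SO(4)) \cong S^7/(Sp(1) \times O(2))$.

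The main obstacle is the normalizer calculation in the second case, in particular verifying $N_{SO(4)}(SO(2))/SO(2) \cong O(2)$ (ruling out any larger Weyl-type group from automorphisms that would interchange the two real 2-planes) and pinning down the precise form of the induced action on the fixed set. Once these are established, both claimed smooth SRF isometries follow immediately from the general principal isotropy reduction theorem, and the parenthetical caveat that these isometries do not preserve the spectra is consistent with the observation made just after Theorem~\ref{main2} that smooth SRF isometries need not be leaf-codimension preserving.
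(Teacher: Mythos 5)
Your group-theoretic computations agree with the paper's: in both cases you identify the same principal isotropy ($U(1)$, resp.\ $\{e\}\times SO(2)$), the same fixed-point sphere $S^7$, the same normalizers ($U(1)\times U(2)$, resp.\ $Sp(1)\times S(O(2)\times O(2))$), and the same induced actions of $U(2)$, resp.\ $Sp(1)\times O(2)$, on $S^7$. The genuine gap is in your opening sentence. Principal isotropy reduction, as it appears in \cite{GL2014,GL2015}, yields an isometry of \emph{metric spaces} $M^H/(N_G(H)/H)\to M/G$ induced by the inclusion of (a component of) $M^H$; it does not by itself deliver a smooth SRF isometry in the sense of Definition~\ref{d:srfiso}. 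Smoothness of the map in one direction is immediate (a $G$-invariant smooth function on $M$ restricts to an $N_G(H)$-invariant smooth function on $M^H$), but smoothness of the inverse requires that every $N_G(H)/H$-invariant smooth function on $M^H$ extend to a $G$-invariant smooth function on $M$ --- a Luna--Richardson/Schwarz-type statement that you neither prove nor can simply read off from the cited reduction technique. This is precisely the point the paper does not take for granted: it first records only the metric-space isometries $S^{11}/U(3)=S^7/U(2)$ and $S^{11}/Sp(1)\times SO(4)=S^7/Sp(1)\times O(2)$, and then upgrades them to smooth SRF isometries by noting that all of these orbit spaces are three dimensional and invoking \cite[Theorem 1.5]{AL2011}, which guarantees smoothness of metric isometries between such low-dimensional orbit spaces. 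To close your argument, either add this last step or give a proof (or a precise reference) of the smooth version of the reduction.

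Two smaller remarks. First, the fixed set of your principal $U(1)$ in the $st^*$ summand is the \emph{conjugate} of the $2$-plane spanned by $v$ and $\bar w$, not that plane itself; this is harmless because one may take the plane to be a coordinate (hence conjugation-invariant) plane, as the paper does, but the statement as written is imprecise. Second, the reduction statement should be applied to an appropriate connected component of the fixed-point set; here $(S^{11})^{K}\cong S^7$ is connected, so nothing goes wrong, but the general formulation you quote should acknowledge this.
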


\begin{proof}

Here we view $S^{11} \subset \mathbb{C}^6$ so that points $(z_1, \ldots, z_6) \in S^{11}$ are such that $z_i \in \mathbb{C}$ and $|z_1|^2 + \dots + |z_6|^2 = 1$. We consider each of the reductions separately. First we apply principal isotropy reduction to $S^{11}/U(3)$. The principal isotropy for this action is $K=U(1)\times Id_2$. We choose a specific copy of $K \leq U(3)$, say

\[
K= \bigg{ \{ }
\left[
\begin{array}{c | c c}
z & &  \\ 
\hline 
 & 1 & 0 \\
 & 0 & 1 \\
 \end{array}
 \right] : z \in U(1)   \bigg{  \} }
\]

\noindent and then use the embedding of $U(3)$ into $Isom(S^{11})$ given in Theorem~\ref{main} to realize the principal isotropy as 

\[
K= \bigg{ \{ }
\left[
\begin{array}{ c | c}
\begin{array}{c | c c}
z & &  \\ 
\hline 
 & 1 & 0 \\
 & 0 & 1 \\
 \end{array} & \\
\hline 
& \begin{array}{c | cc }
 \bar{z} & &  \\ 
\hline 
  & 1 & 0 \\
  & 0 & 1 \\ 
\end{array}
\end{array}
\right]   \colon z \in U(1)     \bigg{ \} }       \leq SU(6) \leq Isom(S^{11}).
\]

\noindent We see that the vectors $v=(0,z_2,z_3,0,z_5,z_6) \in S^{11}$ are fixed by $K$ and that the fixed point set is therefore a copy of $S^7 \subset S^{11}$. Let $N$ denote the induced action on this fixed point set of the normalizer of $K$ in $U(3)$. Because $N$ acts on the vectors $v=(0,z_2,z_3,0,z_5,z_6) \in S^{11}$ the largest it can be is

\[
N= \bigg{ \{ }
\left[
\begin{array}{ c | c}
\begin{array}{c | c  }
z &   \begin{array}{c c}  &  \end{array}  \\ 
\hline 
  \begin{array}{c }  \\  \end{array} & \\ & A
\end{array} &  \\
\hline 
& \begin{array}{c | c }
\bar{z} &   \begin{array}{c c} &  \end{array}  \\ 
\hline 
  \begin{array}{c }  \\  \end{array}  & \\ &  \bar{A}
\end{array}
\end{array}
\right]  : z \in U(1) , ~ A \in U(2)   \bigg{  \} }.
\]

\noindent It is straightforward to check that $U(1) \times U(2)$ normalizes $K=U(1) \times Id_2$ so that indeed $N=U(1) \times U(2)$. Principal isotropy reduction therefore yields the metric space isometry $S^{11}/U(3)= S^7/ (N/K) = S^7/U(2)$. Finally, as these orbit spaces are all three dimensional, \cite[Theorem 1.5]{AL2011} implies that these metric space isometries are smooth SRF isometries.


Next we apply principal isotropy reduction to $S^{11}/(Sp(1) \times SO(4))$. The principal isotropy for this action is $K=Id_4 \times (SO(2) \times Id_2)$. We choose a specific copy of $K \leq Sp(1)\times SO(4)$, say

\[
K= \bigg{ \{ }
\left[
\begin{array}{c | c  c }
Id_4 & &  \\ 
\hline 
 & A & 0   \\
 & 0 & Id_2 \\
  \end{array}
\right]  : A \in SO(2) \bigg{ \} }
\]

\noindent and then use the embedding of $Sp(1) \times SO(4)$ into $Isom(S^{11})$ given in Theorem~\ref{main} to realize the principal isotropy as

\[
K=  \bigg{ \{ }
\left[
\begin{array}{c | c  c | c c}
Id_4 & & & &  \\ 
\hline 
 & A & 0 & &  \\
 & 0 & Id_2 & & \\
 \hline
 & & & A & 0 \\
 & & & 0 & Id_2 \\
 \end{array}
\right]   : A \in SO(2) \bigg{ \} }  \leq SU(6) \leq Isom(S^{11}).
\]

\noindent We see that the vectors $v=(z_1, z_2, 0, z_4, 0, z_6) \in S^{11}$ are fixed by $K$ and that the fixed point set is therefore a copy of $S^7 \subset S^{11}$. Let $N$ denote the induced action on this fixed point set of the normalizer of $K$ in $Sp(1) \times SO(4)$. Because $N$ acts on the vectors $v=(z_1, z_2, 0, z_4, 0, z_6) \in S^{11}$ the largest it can be is

\[
N=  \bigg{ \{ }
\left[
\begin{array}{c | c  c | c c}
B & & & &  \\ 
\hline 
 & C & 0 & &  \\
 & 0 & D & & \\
 \hline
 & & & C & 0 \\
 & & & 0 & D \\
 \end{array}
\right]   : B \in Sp(1),  \left[  \begin{array}{ c c} C & 0 \\ 0 & D \end{array} \right]  \in SO(4)       \bigg{ \} }
\]

\noindent It is straightforward to check that the above matrices normalize $K=Id_4 \times (SO(2) \times Id_2)$ so that indeed $N= Sp(1) \times S(O(2) \times O(2))$. We have that $N/K = Sp(1) \times S(O(1) \times O(2)) = Sp(1) \times O(2)$ so that principal isotropy reduction yields the metric space isometry $$S^{11}/(Sp(1) \times SO(4))= S^7/(N/K)=  S^7/(Sp(1) \times O(2))  .$$ Again, as these orbit spaces are all three dimensional, \cite[Theorem 1.5]{AL2011} implies that these metric space isometries are smooth SRF isometries. 
\end{proof}

\begin{rem}\normalfont Arguments similar to those of Theorem~\ref{main} show that the orbit spaces $S^{4n-1}/U(n)$ and $S^{4n-1}/(Sp(m) \times SO(2n-2m))$ are isospectral for odd integers $n \geq 3$ with $m=(n-1)/2$. We note, however, that one can not identify additional inaudible properties from these pairs as principal isotropy reduction yields the following smooth SRF isometries (again note that these isometries do not preserve the spectra): $$S^{4n-1}/U(n)=S^7/U(2)$$$$S^{4n-1}/(Sp(m) \times SO(2n-2m))= S^7/(Sp(1)\times O(2)).$$
\end{rem}

We now study the isotropy type and quotient codimension of these reduced spaces. We note that the embeddings of $U(2)$ and $Sp(1)\times O(2)$ into $U(4) \leq Isom(S^7)$ are analogous to the embeddings given in Theorem~\ref{main}. We therefore consider the vector decomposition $v=(v_1,v_2) \in S^7 \subset \mathbb{C}^2 \oplus  \mathbb{C}^2$ when discussing the first quotient and $v=(v_1,v_2,v_3) \in S^7 \subset  \mathbb{C}^2 \oplus \mathbb{C} \oplus \mathbb{C}$ when discussing the second quotient. The following two tables catalogue the isotropy and quotient codimension of these reduced spaces using this vector decomposition.

\begin{center}
\captionof{table}{$O_1 =S^7/U(2)$}
\begin{tabular}{ l L{2.8cm} c L{3.5cm}} \toprule
    {Row} &  {Isotropy} & {qcodim} & {Points}  \\ \midrule
    $A$ & $Id$   & 0 & $v_1 \ne z \cdot \bar{v}_2$    \\
    $B$ & $U(1)$ & 1 &  $v_1= z \cdot \bar{v}_2$  \\  \bottomrule
\end{tabular}
$\text{Note: } v=(v_1,v_2) \in S^7 \subset \mathbb{C}^2 \oplus  \mathbb{C}^2$ and $z \in \mathbb{C}$
\end{center}


\vspace{.1in}

\begin{center}
\captionof{table}{$O_2 =S^7/(Sp(1)\times O(2))$}
\begin{tabular}{l L{3.3cm} c L{3.5cm}} \toprule
     {Row} & {Isotropy} & {qcodim} & {Points}  \\ \midrule
     $A$ & $Id \times Id$ & 0 &  $v_1 \ne 0, ~ v_2 \ne \lambda \cdot v_3$  \\
     $B$ & $Id \times O(1)$  & 1 &  $v_1 \ne 0, ~ v_2 = \lambda \cdot v_3$   \\
     $C$ & $Sp(1) \times Id$ & 1 & $v_1 = 0, ~ v_2 \ne \lambda \cdot v_3$   \\
     $D$ & $Id \times O(2) $ & 3 & $v_1 \ne 0, ~ v_2 =v_3=0$    \\
     $E$ & $Sp(1) \times O(1)$ & 2 & $v_1 = 0, ~ v_2 = \lambda \cdot v_3$  \\ \bottomrule
\end{tabular}
$\text{Note: } v=(v_1,v_2,v_3) \in S^7 \subset  \mathbb{C}^2 \oplus \mathbb{C} \oplus \mathbb{C}$ and $\lambda \in \mathbb{R}$
\end{center}

\begin{lemma}\label{two} The space $O_2=S^7/(Sp(1)\times O(2))$ admits a non-orbifold point.
\end{lemma}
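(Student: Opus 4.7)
The plan is to apply Theorem~\ref{22}, which asserts that a point in the quotient is an orbifold point if and only if the slice representation at a preimage is polar. My candidate non-orbifold point is the cone vertex $D$, corresponding to the deepest stratum in the stratification of $O_2$, with preimage $p = (v_1, 0, 0) \in S^7$ satisfying $|v_1| = 1$ and having isotropy $G_p = Id \times O(2)$. I then need to compute the slice representation at $p$ and show it is not polar.

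For the slice representation: since $Sp(1) \cong SU(2)$ acts transitively on the unit sphere of $\mathbb{C}^2 = \mathbb{H}$ with trivial stabilizer, its orbit through $p$ is $S^3 \times \{0\} \times \{0\} \subset S^7$, and the orbit tangent exhausts the $v_1$-perpendicular directions in the first $\mathbb{C}^2$ summand. The normal space is therefore $H_p = \{0\} \oplus \mathbb{C} \oplus \mathbb{C} \cong \mathbb{R}^4$, on which $G_p$ acts through its $O(2)$ factor alone. Unravelling the embedding used in Theorem~\ref{main} and in Lemma~\ref{pid} identifies this action as the real-linear matrix multiplication $g \cdot (v_2, v_3)^T = g (v_2, v_3)^T$ of $O(2) \subset GL_2(\mathbb{R})$ on $\mathbb{C} \oplus \mathbb{C}$; splitting each $\mathbb{C}$ into its real and imaginary parts realizes the slice representation as $2\rho_{\mathrm{st}}$, two copies of the standard representation of $O(2)$ on $\mathbb{R}^2$.

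To show $2\rho_{\mathrm{st}}$ is not polar, I would make a direct Palais--Terng check at a regular point $v = (e_1, e_2) \in \mathbb{R}^2 \oplus \mathbb{R}^2$. With $\mathfrak{o}(2) = \mathbb{R} J$ acting as $J \cdot v = (e_2, -e_1)$, the candidate linear cross-section $\mathfrak{a}_v$ is the $3$-dimensional normal subspace $\{(u, w) : u_2 = w_1\}$, and a short computation gives $\mathfrak{a}_v^{\perp} = \mathbb{R} \cdot ((0, 1), (-1, 0))$. Choosing $q = ((0, 0), (0, 1)) \in \mathfrak{a}_v$, the orbit tangent $J \cdot q = ((0, 0), (-1, 0))$ is not proportional to the generator of $\mathfrak{a}_v^{\perp}$, so $T_q(O(2) \cdot q) \not\subset \mathfrak{a}_v^{\perp}$. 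Hence $\mathfrak{a}_v$ fails to be a section, and since any section of a polar representation on a Euclidean space must be (the translate of) the normal space at a regular point, the representation is not polar.

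Applying Theorem~\ref{22} then yields that $D$ is a non-orbifold point. The main obstacle is the identification of the slice representation, which requires carefully tracing the embedding of Theorem~\ref{main} through the principal isotropy reduction of Lemma~\ref{pid} to verify that the effective action of $G_p$ on $H_p$ is precisely $2\rho_{\mathrm{st}}$ and nothing larger; the polarity check itself is then a short finite-dimensional linear computation.
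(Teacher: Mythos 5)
Your proof is correct, and it establishes the non-polarity of the slice representation by a genuinely different argument than the paper's.

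Both you and the paper identify the candidate non-orbifold point as the image of $p=(v_1,0,0)$ (row $D$), with isotropy $Id\times O(2)$ acting on the $4$-dimensional normal space, and both invoke Theorem~\ref{22} to convert non-polarity of the slice representation into the non-orbifold conclusion. Where you diverge is in \emph{how} non-polarity is established. You pin down the slice representation explicitly as $2\rho_{\mathrm{st}}$ of $O(2)$ on $\mathbb{R}^2\oplus\mathbb{R}^2$ and then run the Palais--Terng definition directly: at the regular point $v=(e_1,e_2)$ the normal space $\mathfrak{a}_v$ is $3$-dimensional, and the explicit vector $q=(0,e_2)\in\mathfrak{a}_v$ has orbit tangent $J\cdot q=(0,-e_1)$ which is not orthogonal to $\mathfrak{a}_v$ — a clean, self-contained finite-dimensional check that I have verified. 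The paper instead passes to the connected component $Sp(1)\times SO(2)$ (justified by \cite[Section 2.4]{GL2015}, which says polarity of a slice representation is insensitive to passing to the identity component), observes that the slice representation of $SO(2)$ there has non-trivial isotropy only at the origin, and cites \cite[Lemma 2]{GL2015} that a polar representation of a connected group with trivial principal isotropy must have a codimension-one stratum with non-trivial isotropy. The paper's route avoids writing down the representation in coordinates and is a one-line appeal to a structural result; your route is more elementary and concretely exhibits the failure of the polarity axiom, at the cost of having to trace the embedding carefully to identify the slice representation as $2\rho_{\mathrm{st}}$ (which you correctly flag as the main computational burden). Both are sound; yours is more self-contained, the paper's is shorter given the cited lemma.
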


\begin{proof} We first show that the slice representation of the action is non-polar at points $v =(v_1,0,0) \in S^7$ from row D of Table 2. By Theorem~\ref{22} we can then conclude that the image of this stratum is a non-orbifold point. 

The slice representation of the action at $v$ is polar if and only if the slice representation of the restriction of the action to the connected component of the identity is polar, cf.~\cite[Section 2.4]{GL2015}. We therefore consider the action of $Sp(1)\times SO(2)$ on $S^7$ which acts with isotropy $Id \times SO(2)$ at $v =(v_1,0,0) \in S^7$. We recall from \cite[Lemma 2]{GL2015} that polar actions of connected groups with trivial principal isotropy must have a codimension 1 stratum with non-trivial isotropy. The slice representation of $Sp(1)\times SO(2)$ at $v =(v_1,0,0) \in S^7$ has non-trivial isotropy only at the zero vector, hence is non-polar.
\end{proof}

\begin{lemma}[\cite{GL2015}, Theorem 1]\label{one} The space $O_1= S^{11}/U(3)$ is isometric to $S^3 / \mathbb{Z}_2$, the 3-hemisphere of constant sectional curvature 4. 
\end{lemma}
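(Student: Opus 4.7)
By Lemma~\ref{pid}, it suffices to establish an isometry $S^7/U(2) \cong S^3_+(1/2)$. My plan is to identify $S^7/U(2)$ with the Bloch ball of $2 \times 2$ density matrices and then verify that the induced quotient metric is the round metric on the hemisphere of radius $1/2$. First, I would observe that complex conjugation on the second summand of $\mathbb{C}^2 \oplus \mathbb{C}^2$ is an $\mathbb{R}$-linear orthogonal involution intertwining the real representations $(U(2), st \oplus st^*)$ and $(U(2), st \oplus \overline{st}) \cong (U(2), st \oplus st)$, so it descends to an isometry of orbit spaces and reduces the problem to $S^7/(U(2), st \oplus st)$. Identifying $\mathbb{C}^2 \oplus \mathbb{C}^2$ with $M_{2 \times 2}(\mathbb{C})$ by letting the two summands be the columns, $U(2)$ acts on the left and its invariant ring is generated by the entries of $M^*M$. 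Hence the orbit map $M \mapsto M^*M$ identifies $S^7/U(2)$ with the space of $2\times 2$ density matrices, i.e.~the Bloch ball $\{\rho = \tfrac{1}{2}(I + \vec{x} \cdot \vec{\sigma}) : |\vec{x}| \leq 1\}$.

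To compute the induced quotient metric, I would use the residual right $U(2)$-action on $M_{2 \times 2}(\mathbb{C})$, which commutes with the left action, descends to conjugation $\rho \mapsto U\rho U^*$, and acts on $\vec{x}$ as $SO(3)$ via the spin covering $SU(2) \to SO(3)$. The quotient metric is therefore rotationally symmetric, and it suffices to compute it along a single meridian. Taking $M_p = \operatorname{diag}(\sqrt{p}, \sqrt{1-p})$, the horizontal subspace at $M_p$ consists of those $\dot{M}$ for which $\dot{M} M_p^*$ is Hermitian and $\operatorname{Re}\operatorname{tr}(M_p^* \dot{M}) = 0$. Solving these equations and computing $|\dot{M}|^2$ together with the pushforward $\dot{\rho} = \dot{M}^* M_p + M_p^* \dot{M}$ yields, in Bloch coordinates $(P, r, s)$ with $P = p - 1/2$, the meridian metric
\[
\frac{dP^2}{4p(1-p)} + dr^2 + ds^2 \;=\; \frac{dP^2}{1 - 4P^2} + dr^2 + ds^2,
\]
which is exactly the metric on $S^3_+(1/2) \subset \mathbb{R}^4$ restricted to the corresponding meridian in the coordinates obtained by orthogonal projection onto the equatorial hyperplane.

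By the rotational symmetry, this meridian match propagates to the entire Bloch ball, yielding the desired isometry $O_1 \cong S^3_+(1/2)$ of constant sectional curvature $4$. The principal obstacle is the Riemannian-submersion computation at $M_p$: one must solve the Hermiticity and sphere conditions for the horizontal subspace, then carry the resulting inner product through the pushforward to Bloch coordinates without sign or scaling errors. Alternatively, and most succinctly, one may invoke \cite{GL2015}, Theorem~1, whose classification of cohomogeneity-three orthogonal representations of compact Lie groups covers this case directly.
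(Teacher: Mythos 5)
Your proof is correct, and it takes a genuinely different route from the paper. The paper does not give a self-contained argument: it simply cites \cite[Section 3.4]{GL2015} (where $S^{11}/U(3)$ is reduced to $S^7/U(2)$ and then shown isometric to the hemisphere), and adds the remark that O'Neill's formula could alternatively be used to verify constant curvature $4$. You, by contrast, give an explicit computational proof via the density-matrix model: identifying $S^7/U(2)$ with the Bloch ball through $M\mapsto M^*M$, using the commuting right $U(2)$-action to reduce to an $SO(3)$-cohomogeneity-one computation along a meridian, and then comparing the pushforward metric with the hemisphere metric in orthogonal-projection coordinates. I checked the meridian computation: with $M_p=\operatorname{diag}(\sqrt p,\sqrt{1-p})$, the horizontality conditions you state are the right ones, and solving them gives $\alpha\in\mathbb{R}$, $\delta=-\sqrt{p/(1-p)}\,\alpha$, $\beta=\sqrt{p/(1-p)}\,\bar\gamma$, so that $|\dot M|^2=(\alpha^2+|\gamma|^2)/(1-p)$, and pushing forward through $\dot\rho=\dot M^*M_p+M_p^*\dot M$ and converting to Bloch coordinates (with the rescaling implicit in your $P=p-\tfrac12$, $r=x_1/2$, $s=x_2/2$) does produce $\tfrac{dP^2}{1-4P^2}+dr^2+ds^2$, matching $S^3_+(1/2)$ under the projection chart; the $SO(3)$-symmetry then propagates this globally. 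One small wrinkle in your write-up: since $st^*(A)=(A^{-1})^T=\bar A$ for unitary $A$, the representation $st^*$ already \emph{is} $\overline{st}$, so the intermediate identification ``$(U(2),st\oplus st^*)$ intertwines with $(U(2),st\oplus\overline{st})$'' via conjugation is circular as stated; conjugation on the second summand in fact intertwines $st\oplus st^*$ directly with $st\oplus st$, which is the isomorphism you actually use. Your closing observation that one may instead just invoke \cite{GL2015} is exactly what the paper does; the value of your route is that it makes the isometry completely explicit, which the paper's citation and the O'Neill remark do not.
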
 

The proof of Lemma~\ref{one} is provided in \cite[Section 3.4]{GL2015} where $S^{11}/U(3)$ is first reduced to $S^7/U(2)$ via principal isotropy reduction as above and then shown to be isometric to $S^3 / \mathbb{Z}_2$, the 3-hemisphere of constant curvature 4. We note that O'Neill's formula can also be used to show that the space $S^7/U(2)$ has constant sectional curvature 4.

\begin{proof}[Proof of Theorem~\ref{main2}] Lemmas~\ref{two} and \ref{one} show that the orbit space $O_1$ is isospectral to an orbifold whereas $O_2$ admits a non-orbifold point, demonstrating the inaudibility of non-orbifold singularities. We can then apply Theorem~\ref{22} to the space $O_2$ to conclude that it has unbounded sectional curvature. We conclude that constant sectional curvature is not determined by the $G$-invariant spectrum. 
\end{proof}


%
%
%
%
%


\bibliographystyle{alpha} 

\begin{thebibliography}{9}




\bibitem{AS2017} I. Adelstein, M. R. Sandoval, Isometries of Singular Riemannian Foliations and their Spectral Properties, preprint (2017).
\bibitem{AL2011} M. Alexandrino, A. Lytchak, On smoothness of isometries between orbit spaces, Riemannian Geometry and Applications, In: Proceedings RIGA Ed. Univ. Bucuresti (2011), 17--28.
\bibitem{AYY} J. An, J.-K Yu, J. Yu. On the dimension datum of a subgroup and its application to isospectral manifolds, J. Differential Geom., 94(1), 59--85 (2013).
\bibitem{Berger} M. Berger, P. Gauduchon, E. Mazet. Le spectre d'une variete Riemannienne, Lect. Notes Math., 194, Berlin: Springer Verlag, 1974.


\bibitem{BH1984} J. Br{\"u}ning, E. Heintze. The asymptotic expansion of Minakshisundaram-Pleijel in the equivariant case, Duke Math. J., 51, 959--980 (1984).
\bibitem{D1985} J. Dadok. Polar coordinates induced by action of compact Lie groups, Trans. Amer. Math. Soc., 288, 125--137 (1985).
\bibitem{DeTGL} D. M. DeTurck, C. S. Gordon, K. B. Lee. Isospectral deformations II: Trace formulas, metrics, and potentials, Comm. Pure Appl. Math., 42(8), 1067--1095 (1989).
\bibitem{DGGW2008} E. Dryden, C. Gordon, S. Greenwald, D. Webb. Asymptotic Expansion of the Heat Kernel for Orbifolds, Michigan Math. J.,56, 205--238 (2008). 
\bibitem{GL2015} C. Gorodski, A. Lytchak, Isometric group actions with an orbifold quotient, Math. Ann., 365, 1041--1067(2016).
\bibitem{GL2014} C. Gorodski, A. Lytchak. On orbit spaces of representations of compact Lie groups, J. Reine Angew. Math., 691, 61--100(2014).
\bibitem{LT2010} A. Lytchak, G. Thorbergsson. Curvature explosion in quotients and applications, J. Differential. Geom., 85, 117-139 (2010).
\bibitem{PT1988} R. Palais, C. Terng. Critical point theory and submanifold geometry, Lecture Notes in Mathematics, 1353. Springer-Verlag, Berlin, 1988. 
\bibitem{P} H. Pesce. Repr{\'e}sentations reletivement {\'e}quivalentes et vari{\'e}t{\'e}s Riemanniannes isospectrales, Comment. Math. Helv., 71(1), 243--268 (1996).
\bibitem{R1998} K. Richardson. The asymptotics of heat kernels on Riemannian foliations, Geom. Funct. Anal., 8, 356--401 (1998).
\bibitem{R2010} K. Richardson, Traces of heat operators on Riemannian foliations, Trans. Amer. Math. Soc., 362(5), 2301--2337 (2010).
\bibitem{RSW2008} J. P. Rossetti, D. Schueth, M. Weilandt. Isospectral orbifolds with different maximal isotropy orders, Ann. Global Anal. Geom., 34(4), 351--356 (2008).
\bibitem{SSW2006} N. Shams, E. Stanhope, D. L. Webb. One cannot hear orbifold isotropy type, Arch. Math., 87(4), 375--385 (2006).
\bibitem{Sun1985} T. Sunada. Riemannian Coverings and Isospectral Manifolds, Ann. of Math.,121(1), 169--186 (1985).
\bibitem{Sut2002} C. J. Sutton. Isospectral simply-connected homogeneous spaces and the spectral rigidity of group actions, Comment. Math. Helv., 77(4), 701--717 (2002).
\bibitem{Sut2010} C. J. Sutton. Equivariant Isospectrality and Sunada's method, Arch. Math., 95(1), 75--85(2010).
\bibitem{Tanno1973} S. Tanno. Eigenvalues of the Laplacian of Riemannian manifolds, Tohoku Math. J. (2), 25, 391--403, (1973).






\end{thebibliography}

\end{document}